\documentclass[a4paper,11pt]{article}
\usepackage{bm,mathrsfs,amsmath,amsthm,amssymb,amscd,longtable,array,graphicx}
%

\newcommand{\nc}{\newcommand}
\nc{\rnc}{\renewcommand}
\nc{\nn}{\nonumber}
\nc{\der}{{\partial}}
\rnc{\Im}{{\rm{Im}\,}}
\rnc{\Re}{{\rm{Re}\,}}
\nc{\db}{\displaybreak[0]\\}
\nc{\bra}{\langle}
\nc{\ket}{\rangle}
\nc{\bs}{\boldsymbol}

\newtheorem{theorem}{Theorem}[section]

\newtheorem{proposition}[theorem]{Proposition}

\theoremstyle{definition}

\numberwithin{equation}{section}

\numberwithin{equation}{section}

\textwidth=15.5cm
\textheight=22cm
\hoffset -10mm
\topmargin -1.2cm
\hoffset -10mm

\begin{document}%
%
\title{A duality formula between elliptic determinants}

\author{
Kohei Motegi \thanks{E-mail: kmoteg0@kaiyodai.ac.jp}
\\\\
{\it Faculty of Marine Technology, Tokyo University of Marine Science and Technology,}\\
 {\it Etchujima 2-1-6, Koto-Ku, Tokyo, 135-8533, Japan} \\
\\\\
\\
}

\date{\today}

\maketitle

\begin{abstract}
We prove a duality formula between two elliptic determinants.
We present a proof which is a variant of the Izergin-Korepin method
which is a method originally introduced
to analyze and compute partition functions of integrable lattice models.
\end{abstract}

\section{Introduction}
Elliptic special functions is an active area of research in recent years.
Investigating formulas for special determinants and Pfaffians
whose matrix elements are given in terms of elliptic functions
is one of the most fundamental subjects,
and there are developments on the
evaluations of the elliptic determinants and Pfaffians in recent years.
For example, factorization formulas for
various elliptic determinants which are analogues, extensions
and variants of the classical
(elliptic) Cauchy determinant (Frobenius determinant)
formula \cite{Cauchy,Frob,Kr} were found
\cite{Has,TV,War,RosSch,Schpath,BK,Rains}.
The Pfaffian analogues of the elliptic Cauchy determinant
formula were also found
\cite{Rains,Okadapfaffian,Roselldet,Rosellpfaffian}, which
generalizes the classical Pfaffian formula \cite{Schur}.

Another interesting subject is to find transformation formulas
between two elliptic determinants or two different Pfaffians
which look totally different at first sight.
See \cite{RosRokko,RosSchtwo} for seminal works on this subject.
Recently, we found a transformation formula between two elliptic Pfaffians
by studying the partition functions of an elliptic integrable model
in two ways \cite{MotegiPfaffian}.
A special case of the transformation formula can be proved 
easily by combining factorization formulas for two elliptic Pfaffians
by Rains \cite{Rains} and Rosengren \cite{Roselldet}.
However, beyond that special point where no
factorized expressions are known,
it seems not so easy to prove the transformation formula.
A similar situation has already appeared in the work by
Rosengren \cite{RosRokko}, in which he proved a duality between
two elliptic determinants which is an elliptic analogue of a duality
discovered by Rosengren-Schlosser \cite{RosSchtwo}.
A special case can be proved by using
the elliptic determinant evaluations by Warnaar \cite{War},
but no factorization formulas are known in general case
in which giving a proof is not so easy.
Rosengren gives a proof of his duality formula \cite{RosRokko}
by using the elliptic Jackson summation formula.
See
\cite{DJKMO,FT,Rosroot,CosGus,RainsBC,Rainselliptic,Spi,vDSpi,Rossummation,ItoNoumi,RW}
for examples on seminal works and various extensions of
the elliptic Jackson summation formulas
and corresponding elliptic integral formulas.
In general, when we do not have
or do not know whether there are factorized expressions,
it is not easy to prove the transformation formula
between determinants or Pfaffians.

In this paper, we prove another
transformation formula for two elliptic determinants
which do not seem to have factorized expressions.
We prove the following theorem.

\begin{theorem} \label{identitytheorem}
The following identity between two elliptic determinants holds:
\begin{align}
&\mathrm{det}_N (X_N(z_1,\dots,z_N|w_1,\dots,w_N|h))
\nonumber \\
=&\displaystyle \frac{[h]\prod_{j=1}^N [h-j/2+1]}
{[h-N/2 ]\prod_{j=1}^N [h+(N+1)/2-j]}
\prod_{1 \le j < k \le N} \frac{[w_k-w_j+1/2]}{[w_k-w_j]}
\nonumber \\
&\times \mathrm{det}_N (Y_N(z_1,\dots,z_N|w_1,\dots,w_N|h)),
\label{ellipticidentity}
\end{align}
where $X_N(z_1,\dots,z_N|w_1,\dots,w_N|h)$ and
$Y_N(z_1,\dots,z_N|w_1,\dots,w_N|h)$ are $N \times N$ matrices
whose matrix elements are given by
\begin{align}
&X_N(z_1,\dots,z_N|w_1,\dots,w_N|h)_{jk} \nonumber \\
=&[h+(j-N)/2+z_k+w_j] \prod_{\ell=1}^{j-1} [w_\ell+z_k+1/2]
\prod_{\ell=j+1}^{N} [w_\ell+z_k]
\prod_{\ell=1}^{N} [w_\ell-z_k] \nonumber \\
&-[h+(j-N)/2-z_k+w_j] \prod_{\ell=1}^{j-1} [w_\ell-z_k+1/2]
\prod_{\ell=j+1}^{N} [w_\ell-z_k]
\prod_{\ell=1}^{N} [w_\ell+z_k], \label{matrixelementsone} \\
&Y_N(z_1,\dots,z_N|w_1,\dots,w_N|h)_{jk} \nonumber \\
=&[h+(N-1)/2+w_k+z_j] \prod_{ \substack{ \ell=1 \\ \ell \neq j}}^{N} [z_\ell+w_k]
\prod_{\ell=1}^{N} [z_\ell-w_k] \nonumber \\
&-[-h-(N-1)/2-w_k+z_j] \prod_{ \substack{ \ell=1 \\ \ell \neq j}}^{N} [z_\ell-w_k]
\prod_{\ell=1}^{N} [z_\ell+w_k], \label{matrixelementstwo}
\end{align}
for $j,k=1,\dots,N$.
Here, $[u]$ is the theta function $[u]=H(\pi i u)$
where $H(u)$ is given by
\begin{align}
H(u)=2 \sinh u
\prod_{j=1}^\infty (1-2{\bf q}^{2j} \cosh 2u+{\bf q}^{4j})(1-{\bf q}^{2j}),
\end{align}
where ${\bf q}$ is the elliptic nome $(0 < {\bf q} < 1)$.

\end{theorem}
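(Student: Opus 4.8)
The plan is to prove (\ref{ellipticidentity}) by a variant of the Izergin--Korepin method: one characterises each side by its analytic behaviour in a single variable together with its symmetries and a reduction relation, and checks that both sides satisfy the same characterisation. I single out the variable $z_N$.

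The first step is the analytic input. Using $[u+1]=-[u]$ and the quasi-periodicity of $[u]$ under $u\mapsto u+\tau$ (with ${\bf q}=e^{i\pi\tau}$), one checks that $\mathrm{det}_N(X_N)$ and $\mathrm{det}_N(Y_N)$, viewed as functions of $z_N$, are odd and obey one and the same functional equation of order $2N$ in $z_N$; this equation is a ``twisted'' one, because the shifts by $\tfrac12$ and by $h$ in (\ref{matrixelementsone})--(\ref{matrixelementstwo}) make the $\tau$-translation act with an extra twist by $e^{\mp2\pi i h}$ rather than by a scalar, so that the ambient space is an $h$-twisted space of theta-type functions rather than the naive one. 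One then checks that both determinants vanish at $z_N\in\{0,\pm z_1,\dots,\pm z_{N-1}\}$: at $z_N=0$ the $N$-th column of $X_N$ (the $N$-th row of $Y_N$) vanishes, and at $z_N=\pm z_m$ two columns of $X_N$ (two rows of $Y_N$) coincide. A count of zeros in the twisted space, using these common zeros together with the balancing condition that forces the remaining (odd) zero to sit at a half-period determined by the character, shows that $\mathrm{det}_N(X_N)$ is a $z_N$-independent multiple of $\mathrm{det}_N(Y_N)$; since both determinants are antisymmetric under permutations of $z_1,\dots,z_N$, this multiple is in fact independent of all the $z_j$, and it remains only to identify the ratio $\mathrm{det}_N(X_N)/\mathrm{det}_N(Y_N)$ as a function of $w_1,\dots,w_N$ and $h$.

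The second step is the reduction. Setting $z_N=w_N$ collapses the $N$-th column of $X_N$ to the single entry $(X_N)_{NN}=-[h][2w_N]\prod_{\ell<N}[w_\ell-w_N+\tfrac12][w_\ell+w_N]$, while each surviving entry $(X_N)_{jk}$ with $j,k<N$ carries a common factor $[w_N+z_k][w_N-z_k]$ whose removal turns the complementary minor into $\mathrm{det}_{N-1}(X_{N-1}(\dots|h-\tfrac12))$ times $\prod_{k<N}[w_N+z_k][w_N-z_k]$. The same specialisation collapses the $N$-th column of $Y_N$ to $(Y_N)_{NN}=[h+\tfrac{N-1}{2}][2w_N]\prod_{\ell<N}[z_\ell-w_N][z_\ell+w_N]$, with the complementary minor becoming $\mathrm{det}_{N-1}(Y_{N-1}(\dots|h+\tfrac12))$ times $\prod_{k<N}[w_N+w_k][w_N-w_k]$. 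Together with the direct $N=1$ check, where $\mathrm{det}_1(X_1)=-\mathrm{det}_1(Y_1)$ agrees with the prefactor value $[h+\tfrac12]/[h-\tfrac12]=-1$, and with the elementary recursion obeyed by the prefactor in (\ref{ellipticidentity}), this sets up an induction on $N$.

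The point I expect to be the main obstacle is the clash between the two reductions: the left-hand side drops to parameter $h-\tfrac12$ while the right-hand side drops to $h+\tfrac12$, so the inductive hypothesis does not apply to them directly. Resolving this is where the ``variant'' lies: one combines the $z$-independence of the ratio established in the first step with the fact that the twisted function space appearing there is $1$-periodic in $h$ --- and $h-\tfrac12$ and $h+\tfrac12$ differ by $1$ --- to conclude that $\mathrm{det}_{N-1}(X_{N-1}(\dots|h-\tfrac12))/\mathrm{det}_{N-1}(Y_{N-1}(\dots|h+\tfrac12))$ is again independent of $z_1,\dots,z_{N-1}$; evaluating it then forces a recursion for the prefactor of the shape $C_N(h)=\big(-[h]\prod_{\ell<N}[w_\ell-w_N+\tfrac12]\big)\big/\big([h+\tfrac{N-1}{2}]\prod_{\ell<N}[w_\ell-w_N]\big)$ times a parameter-shifted copy of $C_{N-1}$, and the closed form in (\ref{ellipticidentity}) then follows by telescoping from the $N=1$ value. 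A secondary technical burden, also not quite routine, is pinning down the precise twisted elliptic character of $\mathrm{det}_N(X_N)$ and $\mathrm{det}_N(Y_N)$ in $z_N$ carefully enough to make the zero count of the first step rigorous.
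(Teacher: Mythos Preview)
Your proposal has a genuine gap, and it sits exactly where you flag a ``secondary technical burden'': the choice of $z_N$ as the distinguished variable. Compute the $\tau$-shift of a single column entry $(X_N)_{jN}=A_j(z_N)-A_j(-z_N)$: the piece $A_j(z_N)$ transforms with the exponential factor $e^{-2\pi i h}$ while $A_j(-z_N)$ transforms with $e^{+2\pi i h}$, so for generic $h$ the entry --- and hence $\det_N X_N$ --- is \emph{not} an element of any $\Theta_{2N}(\chi)$ in $z_N$. The same happens for $\det_N Y_N$. Your ``$h$-twisted space'' is therefore not one of the spaces to which Proposition~\ref{propositionelliptic} applies; to run a zero-count you would have to define this space, prove it is finite-dimensional of the right dimension, and show that vanishing at $\{0,\pm z_1,\dots,\pm z_{N-1}\}$ together with oddness pins an element down up to scalar. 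None of this is done, and it is the heart of the matter, not a side issue. The mismatch you notice in step~2 (left side reducing to $h-\tfrac12$, right side to $h+\tfrac12$) is a symptom of the same wrong choice of variable.

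The paper's proof makes the opposite choice: it singles out $w_N$. This is the key observation. In $w_N$, every summand of $\det_N X_N$ carries $w_N$ only through the product $[h+\tau_{\sigma(N)}z_{\sigma(N)}+w_N]\prod_{j<N}[w_N+\tau_{\sigma(j)}z_{\sigma(j)}]\prod_j[w_N-\tau_{\sigma(j)}z_{\sigma(j)}]$, and the $\pm z_j$ contributions to the $\tau$-character cancel, leaving a common factor $\exp(-2\pi i(2Nw_N+h))$ independent of $\sigma,\tau$. So both sides (after the harmless normalisation by $\prod_{j<k}[z_j-z_k+1/2]/[z_j-z_k]$) are honest elements of the same $\Theta_{2N}(\chi)$ in $w_N$, and Proposition~\ref{propositionelliptic} applies directly. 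The paper then evaluates at the $2N$ points $w_N=\pm z_m$; at $w_N=-z_m$ \emph{both} $X$ and $Y$ reduce to size $N-1$ with parameter $h-\tfrac12$, and at $w_N=+z_m$ both reduce to $h+\tfrac12$ (for $Y$ this uses $[u+1]=-[u]$), so the recursions match exactly and the induction closes with no mismatch. Switching your argument from $z_N$ to $w_N$ would repair it and bring it in line with the paper's proof.
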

We prove Theorem \ref{identitytheorem} in this paper.
The determinants $\mathrm{det}_N (X_N(z_1,\dots,z_N|w_1,\dots,w_N|h))$
and $\mathrm{det}_N (Y_N(z_1,\dots,z_N|w_1,\dots,w_N|h))$
in the Theorem do not seem to factorize,
and we present a proof which works
for determinants which do not seem to
have factorized expressions.
The proof is inspired by and can be regarded as a variant of the
Izergin-Korepin method \cite{Ko,Iz}
in the field of quantum integrable models.
The Izergin-Korepin method was initiated by Korepin \cite{Ko},
which he introduced a way to characterize the domain wall boundary
partition functions of the  $U_q(\widehat{sl_2})$ six-vertex model
\cite{Dr,J,FRT,Baxter,KBI,LW}
which uniquely define them.
Izergin \cite{Iz} later found a determinant form which satisfies
all the properties listed by Korepin,
and is now called as the Izergin-Korepin determinant, which today have many
applications to other branches of mathematics and further investigations,
such as the enumeration of the alternating sign matrices
\cite{Br,Ku1,Ku2,Okada,CP,BWZ} and the thermodynamic limit \cite{KZ}.
The Izergin-Korepin method was also extended to various boundary conditions
\cite{Ku2,Tsuchiya} and other classes of partition functions
such as the scalar products \cite{Wheeler}
and the wavefunctions \cite{MotegiIzerginKorepin}.

The original Izergin-Korepin method is a way to prove
identities between partition functions
which are functions construced by the $R$-matrices
of quantum integrable models,
and explicit determinants, Pfaffians
or symmetric functions.
The idea of the Izergin-Korepin method
can also be used to prove identities between two functions
which look at first sight totally different.
The idea of the method do not have to be restricted to
the computations of partition functions,
and we present one such application in this paper.

This paper is organized as follows.
In the next section, we present some properties
of the theta functions which will be used in this paper,
and check the simplest nontrivial example, i.e.,
the case $N=2$.
In section 3, we present a proof which is inspired
the Izergin-Korepin method.
Section 4 is devoted to the conclusion of this paper.

\section{Preliminaries and the simplest nontrivial example}
In this section,
we first list the properties of theta functions used in this paper.
One of the most fundamental properties about theta functions
is the quasi-periodicities
\begin{align}
[u+1]&=-[u], \label{qpone} \\
[u-i \log ({\bf q})/\pi]&=-{\bf q}^{-1}
\exp  (-2 \pi i u) [u] \label{qptwo}.
\end{align}
Using \eqref{qpone} and the fact that $[u]$ is an odd function
$[-u]=-[u]$, we get
\begin{align}
[u+1/2]=[-u+1/2], \label{usethisproperty}
\end{align}
which is an important property used in this paper.
Another important property is the addition formula for the theta functions
\begin{align}
[u+x][u-x][v+y][v-y]-[v+x][v-x][u+y][u-y]-[x+y][x-y][u+v][u-v]=0.
\label{additionformula}
\end{align}
We will use the above properties
repeatedly to check the simplest nontrivial example
of Theorem \ref{identitytheorem}.

For the proof of Theorem \ref{identitytheorem},
besides the above fundamental properties for the theta functions,
the following notions and properties
about the elliptic polynomials \cite{PRS,FSfelderhof}
is crucial.

A character is a group homomorphism
$\chi$ from multiplicative groups
$\Gamma=\mathbf{Z}+\tau \mathbf{Z}$ to $\mathbf{C}^\times$.
For each character $\chi$ and positive integer $n$, an $n$-dimensional space $\Theta_n(\chi)$
is a set of holomorphic functions $\phi(y)$ on $\mathbf{C}$
satisfying the quasiperiodicities
\begin{align}
\phi(y+1)&=\chi(1) \phi(y), \label{propertyuseone} \\
\phi(y+\tau)&=\chi(\tau) e^{-2 \pi i ny-\pi i n \tau}\phi(y).
\label{propertyusetwo}
\end{align}
The elements of the space $\Theta_n(\chi)$ are called elliptic polynomials.
The space $\Theta_n(\chi)$ is $n$-dimensional \cite{PRS,FSfelderhof},
and the following fact holds for the elliptic polynomials:
\begin{proposition} \cite{PRS,FSfelderhof} \label{propositionelliptic}
Suppose there are two elliptic polynomials $P(y)$ and $Q(y)$
in $\Theta_n(\chi)$, where $\chi(1)=(-1)^n$ and $\chi(\tau)=(-1)^n e^\alpha$.
If these two polynomials are equal at $n$ points $y_j$, $j=1,\dots,n$, satisfying
$y_j-y_k \not\in \Gamma$ and $\sum_{k=1}^N y_k-\alpha \not\in \Gamma$, that is, $P(y_j)=Q(y_j)$,
then the two polynomials are exactly the same: $P(y)=Q(y)$.
\end{proposition}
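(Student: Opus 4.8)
The plan is to reduce the statement to the classical zero-counting principle for elliptic polynomials. First I would set $R(y)=P(y)-Q(y)$. Since the quasiperiodicity conditions \eqref{propertyuseone} and \eqref{propertyusetwo} are linear and homogeneous in $\phi$, the set $\Theta_n(\chi)$ is a complex vector space, so $R\in\Theta_n(\chi)$. By hypothesis $R(y_j)=0$ for $j=1,\dots,n$, and since $y_j-y_k\not\in\Gamma$ these $n$ points are pairwise distinct modulo the lattice $\Gamma$. The goal then becomes the contrapositive claim that a \emph{nonzero} element of $\Theta_n(\chi)$ cannot vanish at $n$ such points when $\sum_{k=1}^n y_k-\alpha\not\in\Gamma$. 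I would argue by contradiction, assuming $R\not\equiv 0$, and derive the two facts below about its zeros in a fundamental parallelogram $D$ with vertices $a,a+1,a+1+\tau,a+\tau$, where $a$ is chosen so that no zero of $R$ lies on $\partial D$.

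The count of zeros comes from the argument principle applied to $\frac{1}{2\pi i}\oint_{\partial D}\frac{R'(y)}{R(y)}\,dy$. The logarithmic derivative $R'/R$ is periodic with period $1$ by \eqref{propertyuseone}, so the two edges of $\partial D$ differing by $1$ contribute opposite amounts and cancel; by \eqref{propertyusetwo}, $R'/R$ shifts by $-2\pi i n$ under $y\mapsto y+\tau$, so the two edges differing by $\tau$ contribute $\frac{1}{2\pi i}\cdot 2\pi i n=n$. Hence $R$ has exactly $n$ zeros in $D$ counted with multiplicity. The sum of the zeros comes from the companion integral $S=\frac{1}{2\pi i}\oint_{\partial D} y\,\frac{R'(y)}{R(y)}\,dy$. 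Transforming the opposite edges by the same two quasiperiodicities collapses the boundary integral to terms built from $\int_a^{a+1}R'/R$ and $\int_a^{a+\tau}R'/R$, which are evaluated using $R(a+1)=(-1)^n R(a)$ and $R(a+\tau)=(-1)^n e^{\alpha}e^{-2\pi i n a-\pi i n\tau}R(a)$; the outcome is the congruence $S\equiv\alpha\pmod\Gamma$ in the normalization of the statement.

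Combining the two facts yields the contradiction. The $n$ points $y_1,\dots,y_n$, being pairwise distinct modulo $\Gamma$, furnish $n$ distinct zeros of $R$; since a nonzero $R$ has exactly $n$ zeros counted with multiplicity, these must be \emph{all} of them, each simple, and no zero lies elsewhere. Consequently $\sum_{k=1}^n y_k\equiv S\equiv\alpha\pmod\Gamma$, i.e. $\sum_{k=1}^n y_k-\alpha\in\Gamma$, directly contradicting the hypothesis $\sum_{k=1}^n y_k-\alpha\not\in\Gamma$. Therefore $R\equiv 0$, that is $P=Q$, which is the assertion of the proposition.

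The main obstacle I anticipate is the careful bookkeeping in the second contour integral: the single-edge integrals of $R'/R$ are defined only up to additive multiples of $2\pi i$ (the branch ambiguity of $\log R$), and one must check that these integer ambiguities, together with the factor $\tau$ introduced when identifying opposite edges, contribute only elements of $\Gamma$ to $S$, so that the congruence $S\equiv\alpha\pmod\Gamma$ is well defined. A route that sidesteps this bookkeeping is to invoke the classical factorization theorem for theta functions: any nonzero $\phi\in\Theta_n(\chi)$ may be written as $\phi(y)=c\prod_{j=1}^n[y-c_j]$ with $\sum_{j=1}^n c_j\equiv\alpha\pmod\Gamma$, since the building block $[\,\cdot\,]$ lies in $\Theta_1$ with a single zero at the origin and its quasiperiodicities \eqref{qpone}, \eqref{qptwo} match \eqref{propertyuseone}, \eqref{propertyusetwo} for $n=1$. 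This factorization delivers both the count $n$ and the zero-sum simultaneously, after which the contradiction proceeds exactly as above.
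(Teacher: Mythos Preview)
Your argument is the standard and correct proof of this classical fact: set $R=P-Q$, use the argument principle on a fundamental parallelogram to see that a nonzero element of $\Theta_n(\chi)$ has exactly $n$ zeros there, compute the weighted integral $\frac{1}{2\pi i}\oint y\,R'/R\,dy$ to pin down the sum of zeros modulo $\Gamma$ as $\alpha$, and conclude by contradiction. The alternative you mention via the product factorization $\phi(y)=c\prod_j[y-c_j]$ is equally valid and indeed cleaner for the zero-sum step. One small point worth making explicit is that the $y_j$ need not themselves lie in the chosen fundamental domain $D$; you should translate each $y_j$ by the appropriate lattice element to its representative in $D$, note that $R$ vanishes there by the quasiperiodicities, and then observe that the sum of these representatives differs from $\sum_j y_j$ by an element of $\Gamma$, so the congruence survives.

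As for comparison with the paper: there is nothing to compare. The paper does not prove Proposition~\ref{propositionelliptic}; it is stated with attribution to \cite{PRS,FSfelderhof} and used as a black box in the Izergin--Korepin uniqueness argument. Your write-up therefore supplies a proof where the paper simply cites one.
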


These properties played important roles for developing
methods for elliptic quantum integrable models, such as 
the separation of the variables method
and the Izergin-Korepin method.
For example, it was used to analyze and compute the explicit forms
of the domain wall boundary partition functions of
the Andrews-Baxter-Forrester model \cite{ABFfelderhof}.
See Refs. \cite{PRS},
\cite{FSfelderhof}, \cite{Ros}, and \cite{YZ} for examples for seminal works
of the developments.
We use this property in the next section
to prove Theorem \ref{identitytheorem}. 
\\

In the end of this section,
let us check Theorem \ref{identitytheorem} by the simplest nontrivial
case $N=2$ by elementary manipulations
(the case $N=1$ is trivial to check).
Using \eqref{usethisproperty} and \eqref{additionformula},
one can show the following four relations.
\begin{align}
&[h-1/2+z_1+w_1][w_2+z_1][h+z_2+w_2][w_1+z_2+1/2] \nonumber \\
-&[h-1/2+z_2+w_1][w_2+z_2][h+z_1+w_2][w_1+z_1+1/2] \nonumber \\
=&-[h+1/2+z_1+w_1][w_2+z_1][h+z_2+w_2][w_1+z_2+1/2] \nonumber \\
+&[h+1/2+z_2+w_1][w_2+z_2][h+z_1+w_2][w_1+z_1+1/2] \nonumber \\
=&-[h][w_2-w_1+1/2][h+1/2+z_1+z_2+w_1+w_2][z_1-z_2], \label{first} \\
&[h-1/2+z_1+w_1][w_2+z_1][h-z_2+w_2][w_1-z_2+1/2] \nonumber \\
-&[h-1/2-z_2+w_1][w_2-z_2][h+z_1+w_2][w_1+z_1+1/2] \nonumber \\
=&-[h+1/2+z_1+w_1][w_2+z_1][h-z_2+w_2][w_1-z_2+1/2] \nonumber \\
+&[h+1/2-z_2+w_1][w_2-z_2][h+z_1+w_2][w_1+z_1+1/2] \nonumber \\
=&-[h][w_2-w_1+1/2][h+1/2+z_1-z_2+w_1+w_2][z_1+z_2], \label{second} \\
&[h-1/2-z_1+w_1][w_2-z_1][h+z_2+w_2][w_1+z_2+1/2] \nonumber \\
-&[h-1/2+z_2+w_1][w_2+z_2][h-z_1+w_2][w_1-z_1+1/2] \nonumber \\
=&-[h+1/2-z_1+w_1][w_2-z_1][h+z_2+w_2][w_1+z_2+1/2] \nonumber \\
+&[h+1/2+z_2+w_1][w_2+z_2][h-z_1+w_2][w_1-z_1+1/2] \nonumber \\
=&[h][w_2-w_1+1/2][h+1/2-z_1+z_2+w_1+w_2][z_1+z_2], \label{third} \\
&[h-1/2-z_1+w_1][w_2-z_1][h-z_2+w_2][w_1-z_2+1/2] \nonumber \\
-&[h-1/2-z_2+w_1][w_2-z_2][h-z_1+w_2][w_1-z_1+1/2] \nonumber \\
=&-[h+1/2-z_1+w_1][w_2-z_1][h-z_2+w_2][w_1-z_2+1/2] \nonumber \\
+&[h+1/2-z_2+w_1][w_2-z_2][h-z_1+w_2][w_1-z_1+1/2] \nonumber \\
=&[h][w_2-w_1+1/2][h+1/2-z_1-z_2+w_1+w_2][z_1-z_2]. \label{fourth}
\end{align}
Using \eqref{first}, \eqref{second}, \eqref{third} and \eqref{fourth},
one can rewrite $\mathrm{det}_2 (X_2(z_1,z_2|w_1,w_2|h))$ as
\begin{align}
&\mathrm{det}_2 (X_2(z_1,z_2|w_1,w_2|h)) \nonumber \\
=&[w_1-z_1][w_2-z_1][w_1-z_2][w_2-z_2] \nonumber \\
\times&([h-1/2+z_1+w_1][w_2+z_1][h+z_2+w_2][w_1+z_2+1/2] \nonumber \\
-&[h-1/2+z_2+w_1][w_2+z_2][h+z_1+w_2][w_1+z_1+1/2]) \nonumber \\
-&[w_1-z_1][w_2-z_1][w_1+z_2][w_2+z_2] \nonumber \\
\times&([h-1/2+z_1+w_1][w_2+z_1][h-z_2+w_2][w_1-z_2+1/2] \nonumber \\
-&[h-1/2-z_2+w_1][w_2-z_2][h+z_1+w_2][w_1+z_1+1/2]) \nonumber \\
-&[w_1+z_1][w_2+z_1][w_1-z_2][w_2-z_2] \nonumber \\
\times&([h-1/2-z_1+w_1][w_2-z_1][h+z_2+w_2][w_1+z_2+1/2] \nonumber \\
-&[h-1/2+z_2+w_1][w_2+z_2][h-z_1+w_2][w_1-z_1+1/2]) \nonumber \\
+&[w_1+z_1][w_2+z_1][w_1+z_2][w_2+z_2] \nonumber \\
\times&([h-1/2-z_1+w_1][w_2-z_1][h-z_2+w_2][w_1-z_2+1/2] \nonumber \\
-&[h-1/2-z_2+w_1][w_2-z_2][h-z_1+w_2][w_1-z_1+1/2]) \nonumber \\
=&[w_1-z_1][w_2-z_1][w_1-z_2][w_2-z_2] \nonumber \\
\times&
(-[h][w_2-w_1+1/2][h+1/2+z_1+z_2+w_1+w_2][z_1-z_2])
\nonumber \\
-&[w_1-z_1][w_2-z_1][w_1+z_2][w_2+z_2] \nonumber \\
\times&
(-[h][w_2-w_1+1/2][h+1/2+z_1-z_2+w_1+w_2][z_1+z_2])
\nonumber \\
-&[w_1+z_1][w_2+z_1][w_1-z_2][w_2-z_2] \nonumber \\
\times&
[h][w_2-w_1+1/2][h+1/2-z_1+z_2+w_1+w_2][z_1+z_2]
\nonumber \\
+&[w_1+z_1][w_2+z_1][w_1+z_2][w_2+z_2] \nonumber \\
\times&
[h][w_2-w_1+1/2][h+1/2-z_1-z_2+w_1+w_2][z_1-z_2]
\nonumber \\
=&[h][w_2-w_1+1/2] \nonumber \\
\times&(
-[w_1-z_1][w_1-z_2][w_2-z_1][w_2-z_2][z_1-z_2][h+1/2+z_1+z_2+w_1+w_2]
\nonumber \\
+&[w_1-z_1][w_1+z_2][w_2-z_1][w_2+z_2][z_1+z_2][h+1/2+z_1-z_2+w_1+w_2]
\nonumber \\
-&[w_1+z_1][w_1-z_2][w_2+z_1][w_2-z_2][z_1+z_2][h+1/2-z_1+z_2+w_1+w_2]
\nonumber \\
+&[w_1+z_1][w_1+z_2][w_2+z_1][w_2+z_2][z_1-z_2][h+1/2-z_1-z_2+w_1+w_2]
), \label{forcomparisonone}
\end{align}
which is a simplification of the left hand side of
\eqref{ellipticidentity} for the case $N=2$.

Let us next examine the right hand side.
The right hand side of \eqref{ellipticidentity}
for the case $N=2$ is $\displaystyle \frac{[h][w_2-w_1+1/2]}{[h+1/2][w_2-w_1]}
\mathrm{det}_2 (Y_2(z_1,z_2|w_1,w_2|h))$.
Using the addition formula \eqref{additionformula},
one can show the following four relations.
\begin{align}
&[h+1/2+w_1+z_1][z_2+w_1][h+1/2+w_2+z_2][z_1+w_2] \nonumber \\
-&[h+1/2+w_1+z_2][z_1+w_1][h+1/2+w_2+z_1][z_2+w_2] \nonumber \\
=&[h+1/2][z_1-z_2][h+1/2+z_1+z_2+w_1+w_2][w_1-w_2], \label{rightfirst} \\
&[h+1/2+w_1+z_1][z_2-w_1][h+1/2+w_2-z_2][z_1+w_2] \nonumber \\
-&[h+1/2+w_2+z_1][z_2-w_2][h+1/2+w_1-z_2][z_1+w_1] \nonumber \\
=&[h+1/2][z_1+z_2][h+1/2+z_1-z_2+w_1+w_2][w_2-w_1], \label{rightsecond} \\
&[h+1/2+w_1-z_1][z_2+w_1][h+1/2+w_2+z_2][z_1-w_2] \nonumber \\
-&[h+1/2+w_2-z_1][z_2+w_2][h+1/2+w_1+z_2][z_1-w_1] \nonumber \\
=&[h+1/2][z_1+z_2][h+1/2-z_1+z_2+w_1+w_2][w_1-w_2], \label{rightthird} \\
&[h+1/2+w_1-z_1][z_2-w_1][h+1/2+w_2-z_2][z_1-w_2] \nonumber \\
-&[h+1/2+w_2-z_1][z_2-w_2][h+1/2+w_1-z_2][z_1-w_1] \nonumber \\
=&[h+1/2][z_2-z_1][h+1/2-z_1-z_2+w_1+w_2][w_1-w_2]. \label{rightfourth}
\end{align}
Using the four relations
\eqref{rightfirst}, \eqref{rightsecond}, \eqref{rightthird} and
\eqref{rightfourth},
one can simplify \\
$\displaystyle \frac{[h][w_2-w_1+1/2]}{[h+1/2][w_2-w_1]}
\mathrm{det}_2 (Y_2(z_1,z_2|w_1,w_2|h))$ as
\begin{align}
&\displaystyle \frac{[h][w_2-w_1+1/2]}{[h+1/2][w_2-w_1]}
\mathrm{det}_2 (Y_2(z_1,z_2|w_1,w_2|h))=
\frac{[h][w_2-w_1+1/2]}{[h+1/2][w_2-w_1]} \nonumber \\
\times&\{
[z_2-w_2][z_1-w_2][z_2-w_1][z_1-w_1] (
[h+1/2+w_1+z_1][z_2+w_1][h+1/2+w_2+z_2][z_1+w_2] \nonumber \\
-&[h+1/2+w_1+z_2][z_1+w_1][h+1/2+w_2+z_1][z_2+w_2]) \nonumber \\
+&[z_1-w_1][z_1-w_2][z_2+w_1][z_2+w_2] (
[h+1/2+w_1+z_1][z_2-w_1][h+1/2+w_2-z_2][z_1+w_2] \nonumber \\
-&[h+1/2+w_2+z_1][z_2-w_2][h+1/2+w_1-z_2][z_1+w_1]) \nonumber \\
+&[z_1+w_1][z_1+w_2][z_2-w_1][z_2-w_2] (
[h+1/2+w_1-z_1][z_2+w_1][h+1/2+w_2+z_2][z_1-w_2] \nonumber \\
-&[h+1/2+w_2-z_1][z_2+w_2][h+1/2+w_1+z_2][z_1-w_1]) \nonumber \\
+&[z_1+w_1][z_2+w_2][z_1+w_2][z_2+w_1] (
[h+1/2+w_1-z_1][z_2-w_1][h+1/2+w_2-z_2][z_1-w_2] \nonumber \\
-&[h+1/2+w_2-z_1][z_2-w_2][h+1/2+w_1-z_2][z_1-w_1]) \} \nonumber \\
=&\frac{[h][w_2-w_1+1/2]}{[h+1/2][w_2-w_1]} \nonumber \\
\times&\{
[z_2-w_2][z_1-w_2][z_2-w_1][z_1-w_1] \nonumber \\
\times&[h+1/2][z_1-z_2][h+1/2+z_1+z_2+w_1+w_2][w_1-w_2]
\nonumber \\
+&[z_1-w_1][z_1-w_2][z_2+w_1][z_2+w_2] \nonumber \\
\times&[h+1/2][z_1+z_2][h+1/2+z_1-z_2+w_1+w_2][w_2-w_1]
\nonumber \\
+&[z_1+w_1][z_1+w_2][z_2-w_1][z_2-w_2] \nonumber \\
\times&[h+1/2][z_1+z_2][h+1/2-z_1+z_2+w_1+w_2][w_1-w_2]
\nonumber \\
+&[z_1+w_1][z_2+w_2][z_1+w_2][z_2+w_1] \nonumber \\
\times&[h+1/2][z_2-z_1][h+1/2-z_1-z_2+w_1+w_2][w_1-w_2]
\} \nonumber
\end{align}
\begin{align}
=&[h][w_2-w_1+1/2] \nonumber \\
\times&(
-[w_1-z_1][w_1-z_2][w_2-z_1][w_2-z_2][z_1-z_2][h+1/2+z_1+z_2+w_1+w_2]
\nonumber \\
+&[w_1-z_1][w_1+z_2][w_2-z_1][w_2+z_2][z_1+z_2][h+1/2+z_1-z_2+w_1+w_2]
\nonumber \\
-&[w_1+z_1][w_1-z_2][w_2+z_1][w_2-z_2][z_1+z_2][h+1/2-z_1+z_2+w_1+w_2]
\nonumber \\
+&[w_1+z_1][w_1+z_2][w_2+z_1][w_2+z_2][z_1-z_2][h+1/2-z_1-z_2+w_1+w_2]
). \label{forcomparisontwo}
\end{align}
Since the simplifications
\eqref{forcomparisonone} of
$\mathrm{det}_2 (X_2(z_1,z_2|w_1,w_2|h))$
and \eqref{forcomparisontwo} of
\\
$
\displaystyle \frac{[h][w_2-w_1+1/2]}{[h+1/2][w_2-w_1]}
\mathrm{det}_2 (Y_2(z_1,z_2|w_1,w_2|h))$
are the same,
one has checked that
\begin{align}
\mathrm{det}_2 (X_2(z_1,z_2|w_1,w_2|h))
=\displaystyle \frac{[h][w_2-w_1+1/2]}{[h+1/2][w_2-w_1]}
\mathrm{det}_2 (Y_2(z_1,z_2|w_1,w_2|h)),
\end{align}
holds.

\section{Proof}
In this section, we prove
Theorem \ref{identitytheorem}.
We prove the following equivalent
theorem which both hand sides of
\eqref{ellipticidentity} in Theorem \ref{identitytheorem}
are multiplied by $\displaystyle \prod_{1 \le j < k \le N} \frac{[z_j-z_k+1/2]}{[z_j-z_k]}$.
\begin{theorem} \label{identitytheoremprove}
The following identity between two elliptic determinants holds:
\begin{align}
&
\prod_{1 \le j < k \le N} \frac{[z_j-z_k+1/2]}{[z_j-z_k]}
\mathrm{det}_N (X_N(z_1,\dots,z_N|w_1,\dots,w_N|h))
\nonumber \\
=&\displaystyle \frac{[h]\prod_{j=1}^N [h-j/2+1]}
{[h-N/2 ]\prod_{j=1}^N [h+(N+1)/2-j]}
\prod_{1 \le j < k \le N} \frac{[z_j-z_k+1/2][w_k-w_j+1/2]}{[z_j-z_k][w_k-w_j]}
\nonumber \\
&\times \mathrm{det}_N (Y_N(z_1,\dots,z_N|w_1,\dots,w_N|h)).
\label{ellipticidentityprove}
\end{align}
\end{theorem}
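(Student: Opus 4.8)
The plan is to establish the equivalent Theorem~\ref{identitytheoremprove} by induction on $N$, in the spirit of the Izergin--Korepin method, with $w_N$ as the distinguished variable. Write $F_N$ and $G_N$ for the left- and right-hand sides of \eqref{ellipticidentityprove}. The first step is to show that, for generic values of the remaining variables, both $F_N$ and $G_N$ are, as functions of $w_N$, elliptic polynomials lying in one and the same space $\Theta_{2N}(\chi)$. For $F_N$ this follows because $w_N$ enters $\mathrm{det}_N X_N$ only through the $N$-th row and, in the other rows, only through factors $[w_N+z_k][w_N-z_k]$, so $\mathrm{det}_N X_N$ has $w_N$-degree $2N$, while the prefactor $\prod_{1\le j<k\le N}[z_j-z_k+1/2]/[z_j-z_k]$ is $w_N$-free. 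For $G_N$ one notes in addition that $\mathrm{det}_N Y_N$ vanishes at $w_N=w_j$ (columns $N$ and $j$ of $Y_N$ then agree), so that $\prod_{j=1}^{N-1}[w_N-w_j]$ divides it and cancels the poles of the $w_N$-dependent part $\prod_{j=1}^{N-1}[w_N-w_j+1/2]/[w_N-w_j]$ of the prefactor; counting the quasi-periodicity factors produced by \eqref{qpone}--\eqref{qptwo} and simplifying with \eqref{usethisproperty}, the $w_N$-degree is again $2N$ and the character $\chi$ coincides with that of $F_N$.

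Next I would prove the recursion. Fix $\epsilon\in\{+1,-1\}$ and $m\in\{1,\dots,N\}$ and put $w_N=\epsilon z_m$. The factors $[w_N+z_m][w_N-z_m]$ present in rows $1,\dots,N-1$ of column $m$ of $X_N$ then acquire a zero, so that column $m$ of $X_N$ is zero except for its $(N,m)$ entry; a cofactor expansion along column $m$, together with the factorization $\prod_{\ell=j+1}^{N}[w_\ell+z_k]\prod_{\ell=1}^{N}[w_\ell-z_k]=[w_N+z_k][w_N-z_k]\times(\cdots)$ of each entry of the surviving $(N-1)\times(N-1)$ minor, identifies that minor---up to an explicit product of theta functions---with $\mathrm{det}_{N-1} X_{N-1}(\{z_\ell\}_{\ell\neq m}\,|\,w_1,\dots,w_{N-1}\,|\,h-1/2)$. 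Symmetrically, a zero appears in the $z$-products of column $N$ of $Y_N$, making that column vanish except for its $(m,N)$ entry, and the factorization $\prod_{\ell\neq j}[z_\ell+w_k]\prod_{\ell}[z_\ell-w_k]=[z_m+w_k][z_m-w_k]\times(\cdots)$ identifies the resulting minor with $\mathrm{det}_{N-1} Y_{N-1}(\{z_\ell\}_{\ell\neq m}\,|\,w_1,\dots,w_{N-1}\,|\,h+1/2)$. Reassembling the prefactors---here the $\prod_{k\neq m}[z_m-z_k]$ coming from the minor cancels against part of $\prod_{j<k}[z_j-z_k+1/2]/[z_j-z_k]$, which is what makes the computation close---yields
\begin{align*}
F_N\big|_{w_N=\epsilon z_m}&=\Phi^{\epsilon}_m\cdot F_{N-1}(\{z_\ell\}_{\ell\neq m}\,|\,w_1,\dots,w_{N-1}\,|\,h-1/2),\\
G_N\big|_{w_N=\epsilon z_m}&=\Psi^{\epsilon}_m\cdot G_{N-1}(\{z_\ell\}_{\ell\neq m}\,|\,w_1,\dots,w_{N-1}\,|\,h+1/2),
\end{align*}
with explicit theta products $\Phi^{\epsilon}_m,\Psi^{\epsilon}_m$. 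The two shifts $h\mp 1/2$ are reconciled by \eqref{qpone}: under $h\mapsto h+1$ every matrix entry of $X_{N-1}$ and of $Y_{N-1}$ changes sign, hence $\mathrm{det}_{N-1} X_{N-1}$ and $\mathrm{det}_{N-1} Y_{N-1}$ pick up $(-1)^{N-1}$, so $G_{N-1}(\dots|h+1/2)$ may be rewritten through $G_{N-1}(\dots|h-1/2)$ and an explicit ratio of normalization constants. After these substitutions, and after cancelling common theta factors using \eqref{qpone} and \eqref{usethisproperty}, the equality $F_N|_{w_N=\epsilon z_m}=G_N|_{w_N=\epsilon z_m}$ reduces to the single identity
\begin{align*}
&\frac{[h]\prod_{j=1}^{N}[h-j/2+1]}{[h-N/2]\prod_{j=1}^{N}[h+(N+1)/2-j]}\\
&\qquad=-\,\frac{[h]\,[h-1/2]\prod_{j=1}^{N-1}[h+1/2-j/2]}{[h+(N-1)/2]\,[h-N/2]\prod_{j=1}^{N-1}[h+(N-1)/2-j]},
\end{align*}
which one checks by cancelling factors and using $[h-1/2]=-[h+1/2]$.

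The induction now closes. The base case $N=1$ is immediate, since $\mathrm{det}_1 Y_1=-\mathrm{det}_1 X_1$ and the normalization constant in \eqref{ellipticidentity} equals $-1$ (because $[h+1/2]/[h-1/2]=-1$); the case $N=2$ has been verified directly in Section~2. Assuming Theorem~\ref{identitytheoremprove} for $N-1$, we have $F_{N-1}=G_{N-1}$, and the recursion then makes $F_N$ and $G_N$ agree at the $2N$ points $w_N=\pm z_1,\dots,\pm z_N$. For generic $z_1,\dots,z_N$ these $2N$ points are pairwise incongruent modulo $\Gamma$ and their sum is $0$, so the hypotheses of Proposition~\ref{propositionelliptic} are met with $n=2N$; since $\dim\Theta_{2N}(\chi)=2N$, it follows that $F_N=G_N$ as functions of $w_N$, hence identically. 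This proves Theorem~\ref{identitytheoremprove}, and therefore Theorem~\ref{identitytheorem}.

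I expect the main obstacle to be the second step---performing the two cofactor reductions cleanly and then tracking all signs, the $h$-shifts, and the many theta-function prefactors so that the recursions for $F_N$ and for $G_N$ become literally identical. It is precisely this matching that forces, and explains, the particular normalization constant appearing in \eqref{ellipticidentity}.
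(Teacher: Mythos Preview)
Your proposal is correct and follows essentially the same Izergin--Korepin strategy as the paper: show that both sides are elliptic polynomials of degree $2N$ in $w_N$ lying in the same space $\Theta_{2N}(\chi)$, establish recursions at the $2N$ specializations $w_N=\pm z_m$, and conclude by induction via Proposition~\ref{propositionelliptic}. The only organizational difference is in how the $h$-shifts are handled: you obtain the ``natural'' shifts $h-1/2$ on the $X$-side and $h+1/2$ on the $Y$-side (independently of $\epsilon$) and then reconcile them a posteriori using $[u+1]=-[u]$, whereas the paper absorbs that periodicity into the recursions themselves so that both $L_N$ and $R_N$ satisfy literally the same relation (shift $h-1/2$ at $w_N=-z_m$ and $h+1/2$ at $w_N=+z_m$); this makes the matching of the $z,w$-dependent prefactors---which, as you rightly anticipate, is the main computational burden---somewhat cleaner to carry out.
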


\begin{proof}

Let us denote the left hand side and right hand side of
\eqref{ellipticidentityprove} as
$L_N(z_1,\dots,z_N|w_1,\dots,w_N|h)$
and $R_N(z_1,\dots,z_N|w_1,\dots,w_N|h)$ respectively.
\begin{align}
&L_N(z_1,\dots,z_N|w_1,\dots,w_N|h) \nonumber \\
=&
\prod_{1 \le j < k \le N} \frac{[z_j-z_k+1/2]}{[z_j-z_k]}
\mathrm{det}_N (X_N(z_1,\dots,z_N|w_1,\dots,w_N|h)), \label{leftexpression} \\
&R_N(z_1,\dots,z_N|w_1,\dots,w_N|h) \nonumber \\
=&\displaystyle \frac{[h]\prod_{j=1}^N [h-j/2+1]}
{[h-N/2 ]\prod_{j=1}^N [h+(N+1)/2-j]}
\prod_{1 \le j < k \le N} \frac{[z_j-z_k+1/2][w_k-w_j+1/2]}{[z_j-z_k][w_k-w_j]}
\nonumber \\
&\times \mathrm{det}_N (Y_N(z_1,\dots,z_N|w_1,\dots,w_N|h)).
\label{rightexpression}
\end{align}
To prove Theorem \ref{identitytheoremprove},
we first show the following properties for $L_N(z_1,\dots,z_N|w_1,\dots,w_N|h)$.

\begin{proposition} \label{korepinlemma}
The functions 
$L_N(z_1,\dots,z_N|w_1,\dots,w_N|h)$
satisfy, and are uniquely determined by, the following properties:
\begin{enumerate}
\item The functions
$L_N(z_1,\dots,z_N|w_1,\dots,w_N|h)$
are elliptic polynomials in $w_{N}$ of degree $2N$ with
the following quasi-periodicities: 
\begin{align}
&L_{N}(z_1,\dots,z_{N}|w_1,\dots,w_N+1|h)
=(-1)^{2N}
L_{N}(z_1,\dots,z_{N}|w_1,\dots,w_N|h)
, \label{qppartitionfunctionsone} \\
&L_{N}(z_1,\dots,z_N|w_1,\dots,w_N-i \log ({\bf q})/\pi|h)
 \nonumber\\
={}&(-{\bf q}^{-1})^{2N}
\exp (-2 \pi i
(2N w_N+h
)) L_{N}(z_1,\dots,z_{N}|w_1,\dots,w_N|h) \label{qppartitionfunctionstwo}.
\end{align}
\item The following relations hold:
\begin{align}
&\hspace{-36pt}L_{N}(z_1,\dots,z_{N}|w_1,\dots,w_N|h)
|_{w_N=-z_m}
=[h][-2z_m]
\prod_{ \substack{ j=1 \\ j \neq m} }^{N}[z_m-z_j+1/2][z_m+z_j]
\nonumber \\
\times&\prod_{j=1}^{N-1} [z_m+w_j+1/2][z_m-w_j]
L_{N-1}(z_1,\dots,\hat{z_m} ,\dots,z_{N}|w_1,\dots,w_{N-1}|h-1/2)
, \label{ordinaryrecursionwavefunction} \\
&\hspace{-36pt}L_{N}(z_1,\dots,z_{N}|w_1,\dots,w_N|h)
|_{w_N=z_m}
=[h][-2z_m]
\prod_{ \substack{ j=1 \\ j \neq m} }^{N}[z_m-z_j+1/2][z_m+z_j]
\nonumber \\
\times&\prod_{j=1}^{N-1} [z_m-w_j+1/2][z_m+w_j]
L_{N-1}(z_1,\dots,\hat{z_m} ,\dots,z_{N}|w_1,\dots,w_{N-1}|h+1/2)
, \label{ordinaryrecursionwavefunctiontwo}
\end{align}
for $m=1,\dots,N$,
and $\hat{z_m}$ in $L_{N-1}(z_1,\dots,\hat{z_m} ,\dots,z_{N}|w_1,\dots,w_{N-1}|h-1/2)$ and \\
$L_{N-1}(z_1,\dots,\hat{z_m} ,\dots,z_{N}|w_1,\dots,w_{N-1}|h+1/2)$
means that $z_m$ is removed.
\item  The following holds:
\begin{align}
&\hspace{-36pt} L_{1}(z_1|w_1|h)=
[h+z_1+w_1][w_1-z_1]-[h-z_1+w_1][w_1+z_1].
\label{ordinaryinitialrecursion}
\end{align}
\end{enumerate}
\end{proposition}

Proposition \ref{korepinlemma}
is a version of the so-called Korepin's Lemma in
the field of quantum integrable models \cite{Ko},
which list the properties of a sequence of functions
which uniquely define them.
Let us explain about the uniqueness.
Property 1 together with Proposition \ref{propositionelliptic}
means that $L_N(z_1,\dots,z_N|w_1,\dots,w_N|h)$
is uniquely determined by its evaluation at $2N$ points.
The evaluations at $2N$ points are Property 2,
which relates $L_N(z_1,\dots,z_N|w_1,\dots,w_N|h)$ at $z_N=\pm w_m$,
$(m=1,\dots,N)$ with
$L_{N-1}(z_1,\dots,\hat{z_m} ,\dots,z_{N}|w_1,\dots,w_{N-1}|h+1/2)$.
This means that $L_N(z_1,\dots,z_N|w_1,\dots,w_N|h)$
is uniquely determined from $L_{N-1}(z_1,\dots,z_{N-1}|w_1,\dots,w_{N-1}|h)$,
and Property 3 corresponds to the determination of the initial term 
of the sequence of functions
$\{ L_N(z_1,\dots,z_N|w_1,\dots,w_N|h) | N \in \mathbb{N} \}$.

Let us show Properties 1--3 in 
Proposition \ref{korepinlemma}.
We first expand
\eqref{leftexpression} as
\begin{align}
&L_N(z_1,\dots,z_N|w_1,\dots,w_N|h) \nonumber \\
=&
\prod_{1 \le j < k \le N} \frac{[z_j-z_k+1/2]}{[z_j-z_k]}
\sum_{\sigma \in S_N} \sum_{\tau_1,\dots,\tau_N=\pm 1}
\mathrm{sgn}(\sigma) (-1)^{|\tau|} 
\prod_{j=1}^N [h+(j-N)/2+\tau_{\sigma(j)} z_{\sigma(j)}+w_j] \nonumber \\
\times&\prod_{j=1}^N \prod_{\ell=1}^{j-1} [w_\ell+\tau_{\sigma(j)}z_{\sigma(j)}+1/2]
\prod_{j=1}^{N-1} \prod_{\ell=j+1}^{N} [w_\ell+\tau_{\sigma(j)} z_{\sigma(j)}]
\prod_{j=1}^N \prod_{\ell=1}^{N} [w_\ell-\tau_{\sigma(j)} z_{\sigma(j)}],
\label{useforproof}
\end{align}
where $|\tau|$ is the number of $\tau_j$'s ($j=1,\dots,N$) satisfying
$\tau_j=-1$.
Let us prove Property 1 
from the expansion \eqref{useforproof}.
One finds that each summand in \eqref{useforproof}
contains the following factors
\begin{align}
f_{\sigma,\tau}(w_N|z_1,\dots,z_N|h)=
[h+\tau_{\sigma(N)} z_{\sigma(N)}+w_N]
\prod_{j=1}^{N-1} [w_N+\tau_{\sigma(j)}z_{\sigma(j)}]
\prod_{j=1}^{N} [w_N-\tau_{\sigma(j)} z_{\sigma(j)}],
\end{align}
from which all the $w_N$-dependence come.
It is easy to calculate the quasi-periodicities for
$f_{\sigma,\tau}(w_N|z_1,\dots,z_N|h)$
\begin{align}
&f_{\sigma,\tau}(w_N+1|z_1,\dots,z_N|h)
=(-1)^{2N}
f_{\sigma,\tau}(w_N|z_1,\dots,z_N|h), \\
&f_{\sigma,\tau}(w_N-i \log ({\bf q})/\pi|z_1,\dots,z_N|h)
 \nonumber\\
={}&(-{\bf q}^{-1})^{2N}
\exp (-2 \pi i
(2N w_N+h
)) f_{\sigma,\tau}(w_N|z_1,\dots,z_N|h).
\end{align}
The quasi-periodicities do not depend on
$\sigma$ nor $\tau$, from which one finds
\eqref{qppartitionfunctionsone} and \eqref{qppartitionfunctionstwo}.

Now let us show Property 2.
First, we note that the determinant of the matrix \\
$X_N(z_1,\dots,z_N|w_1,\dots,w_N|h)$ whose matrix elements are given by
\eqref{matrixelementsone} is antisymmetric with respect to
$z_j \longleftrightarrow z_k$ ($j \neq k$).
The antisymmetry also holds for
$\displaystyle \prod_{1 \le j < k \le N} \frac{[z_j-z_k+1/2]}{[z_j-z_k]}$,
and since $L_N(z_1,\dots,z_N|w_1,\dots,w_N|h)$ is a product of
$\mathrm{det}_N (X_N(z_1,\dots,z_N|w_1,\dots,w_N|h))$
and $\displaystyle \prod_{1 \le j < k \le N} \frac{[z_j-z_k+1/2]}{[z_j-z_k]}$,
we find that $L_N(z_1,\dots,z_N|w_1,\dots,w_N|h)$ is symmetric with respect to
$z_j \longleftrightarrow z_k$ ($j \neq k$).
From this symmetry, it is enough to show
\eqref{ordinaryrecursionwavefunction} and
\eqref{ordinaryrecursionwavefunctiontwo} for the case $m=N$
\begin{align}
&\hspace{-36pt}L_{N}(z_1,\dots,z_{N}|w_1,\dots,w_N|h)
|_{w_N=-z_N}
=[h][-2z_N]
\prod_{j=1}^{N-1}[z_N-z_j+1/2][z_N+z_j]
\nonumber \\
\times&\prod_{j=1}^{N-1} [z_N+w_j+1/2][z_N-w_j]
L_{N-1}(z_1,\dots,z_{N-1}|w_1,\dots,w_{N-1}|h-1/2)
, \label{specialcasetoshow} \\
&\hspace{-36pt}L_{N}(z_1,\dots,z_{N}|w_1,\dots,w_N|h)
|_{w_N=z_N}
=[h][-2z_N]
\prod_{j=1}^{N-1}[z_N-z_j+1/2][z_N+z_j]
\nonumber \\
\times&\prod_{j=1}^{N-1} [z_N-w_j+1/2][z_N+w_j]
L_{N-1}(z_1,\dots,z_{N-1}|w_1,\dots,w_{N-1}|h+1/2)
. \label{specialcasetoshowtwo}
\end{align}
The other cases
\eqref{ordinaryrecursionwavefunction} and
\eqref{ordinaryrecursionwavefunctiontwo} for $m=1, \dots, N-1$
follows from \eqref{specialcasetoshow} and \eqref{specialcasetoshowtwo}
by using the property that $L_{N}(z_1,\dots,z_{N}|w_1,\dots,w_N|h)$
is a symmetric function with symmetric variables $z_j$ $(j=1,\dots,N)$.

Let us show \eqref{specialcasetoshow}.
After the substitution $w_N=-z_N$,
one finds that the only the summands satisfying $\sigma(N)=N$, $\tau_N=+1$
in \eqref{useforproof} survive.
Keeping this in mind, one rewrites $L_N(z_1,\dots,z_N|w_1,\dots,w_N|h)|_{w_N=-z_N}$ as a sum over $\sigma \in S_{N-1}$ and $\tau_1,\dots,\tau_{N-1}$ as
\begin{align}
&L_N(z_1,\dots,z_N|w_1,\dots,w_N|h)|_{w_N=-z_N} \nonumber \\
=&
\prod_{1 \le j < k \le N-1} \frac{[z_j-z_k+1/2]}{[z_j-z_k]}
\prod_{j=1}^{N-1} \frac{[z_j-z_N+1/2]}{[z_j-z_N]}
\sum_{\sigma \in S_{N-1}} \sum_{\tau_1,\dots,\tau_{N-1}=\pm 1}
\mathrm{sgn}(\sigma) (-1)^{|\tau|} \nonumber \\
\times& [h] \prod_{j=1}^{N-1} [h-1/2+(j-(N-1))/2+\tau_{\sigma(j)} z_{\sigma(j)}+w_j] \nonumber \\
\times&
\prod_{\ell=1}^{N-1} [w_\ell+z_{N}+1/2]
\prod_{j=1}^{N-1}
\prod_{\ell=1}^{j-1} [w_\ell+\tau_{\sigma(j)}z_{\sigma(j)}+1/2]
\nonumber \\
\times&
\prod_{j=1}^{N-1} [-z_N+\tau_{\sigma(j)} z_{\sigma(j)}]
\prod_{j=1}^{N-2} \prod_{\ell=j+1}^{N-1} [w_\ell+\tau_{\sigma(j)} z_{\sigma(j)}] \nonumber \\
\times&
[-2z_N]
\prod_{j=1}^{N-1} [-z_N-\tau_{\sigma(j)} z_{\sigma(j)}]
\prod_{\ell=1}^{N-1} [w_\ell- z_N]
\prod_{j=1}^{N-1} \prod_{\ell=1}^{N-1} [w_\ell-\tau_{\sigma(j)} z_{\sigma(j)}].
\label{tochu}
\end{align}
Using
\begin{align}
\prod_{j=1}^{N-1} [-z_N+\tau_{\sigma(j)} z_{\sigma(j)}]
\prod_{j=1}^{N-1} [-z_N-\tau_{\sigma(j)} z_{\sigma(j)}]
=\prod_{j=1}^{N-1} [-z_N+z_j][-z_N-z_j],
\end{align}
one can further rearrange \eqref{tochu} as
\begin{align}
&L_{N}(z_1,\dots,z_{N}|w_1,\dots,w_N|h)
|_{w_N=-z_N}
=[h][-2z_N]
\prod_{j=1}^{N-1}[z_N-z_j+1/2][z_N+z_j] \nonumber \\
\times&
\prod_{j=1}^{N-1} [z_N+w_j+1/2][z_N-w_j]
\prod_{1 \le j < k \le N-1} \frac{[z_j-z_k+1/2]}{[z_j-z_k]}
\nonumber \\
\times&
\sum_{\sigma \in S_{N-1}} \sum_{\tau_1,\dots,\tau_{N-1}=\pm 1}
\mathrm{sgn}(\sigma) (-1)^{|\tau|} 
\prod_{j=1}^{N-1} [h-1/2+(j-(N-1))/2+\tau_{\sigma(j)} z_{\sigma(j)}+w_j] \nonumber \\
\times&\prod_{j=1}^{N-1} \prod_{\ell=1}^{j-1} [w_\ell+\tau_{\sigma(j)}z_{\sigma(j)}+1/2]
\prod_{j=1}^{N-2} \prod_{\ell=j+1}^{N-1} [w_\ell+\tau_{\sigma(j)} z_{\sigma(j)}]
\prod_{j=1}^{N-1} \prod_{\ell=1}^{N-1} [w_\ell-\tau_{\sigma(j)} z_{\sigma(j)}]
\nonumber \\
=&[h][-2z_N]
\prod_{j=1}^{N-1}[z_N-z_j+1/2][z_N+z_j]
\prod_{j=1}^{N-1} [z_N+w_j+1/2][z_N-w_j] \nonumber \\
\times&L_{N}(z_1,\dots,z_{N-1}|w_1,\dots,w_{N-1}|h-1/2),
\end{align}
and we find \eqref{specialcasetoshow} holds.

\eqref{specialcasetoshowtwo} can be shown in a similar way.
In this case, one notes that the summands
satisfying $\sigma(N)=N$, $\tau_N=-1$
in \eqref{useforproof} survive after the substitution $w_N=z_N$.
Then one rewrites $L_N(z_1,\dots,z_N|w_1,\dots,w_N|h)|_{w_N=z_N}$ 
as a sum over $\sigma \in S_{N-1}$ and $\tau_1,\dots,\tau_{N-1}$
in the following way:
\begin{align}
&L_N(z_1,\dots,z_N|w_1,\dots,w_N|h)|_{w_N=z_N} \nonumber \\
=&
\prod_{1 \le j < k \le N-1} \frac{[z_j-z_k+1/2]}{[z_j-z_k]}
\prod_{j=1}^{N-1} \frac{[z_j-z_N+1/2]}{[z_j-z_N]}
\sum_{\sigma \in S_{N-1}} \sum_{\tau_1,\dots,\tau_{N-1}=\pm 1}
(-1) \mathrm{sgn}(\sigma) (-1)^{|\tau|} \nonumber \\
\times& [h] (-1)^{N-1} \prod_{j=1}^{N-1}[h+1/2+(j-(N-1))/2+\tau_{\sigma(j)} z_{\sigma(j)}+w_j] \nonumber \\
\times&
\prod_{\ell=1}^{N-1} [w_\ell-z_{N}+1/2]
\prod_{j=1}^{N-1}
\prod_{\ell=1}^{j-1} [w_\ell+\tau_{\sigma(j)}z_{\sigma(j)}+1/2]
\nonumber \\
\times&
\prod_{j=1}^{N-1} [z_N+\tau_{\sigma(j)} z_{\sigma(j)}]
\prod_{j=1}^{N-2} \prod_{\ell=j+1}^{N-1} [w_\ell+\tau_{\sigma(j)} z_{\sigma(j)}] \nonumber \\
\times&
[2z_N]
\prod_{j=1}^{N-1} [z_N-\tau_{\sigma(j)} z_{\sigma(j)}]
\prod_{\ell=1}^{N-1} [w_\ell+ z_N]
\prod_{j=1}^{N-1} \prod_{\ell=1}^{N-1} [w_\ell-\tau_{\sigma(j)} z_{\sigma(j)}]
\nonumber \\
=&[h][-2z_N]
\prod_{j=1}^{N-1}[z_N-z_j+1/2][z_N+z_j]
\nonumber \\
\times&\prod_{j=1}^{N-1} [z_N-w_j+1/2][z_N+w_j]
\prod_{1 \le j < k \le N-1} \frac{[z_j-z_k+1/2]}{[z_j-z_k]}
\nonumber \\
\times&
\sum_{\sigma \in S_{N-1}} \sum_{\tau_1,\dots,\tau_{N-1}=\pm 1}
\mathrm{sgn}(\sigma) (-1)^{|\tau|} 
\prod_{j=1}^{N-1} [h+1/2+(j-(N-1))/2+\tau_{\sigma(j)} z_{\sigma(j)}+w_j] \nonumber \\
\times&\prod_{j=1}^{N-1} \prod_{\ell=1}^{j-1} [w_\ell+\tau_{\sigma(j)}z_{\sigma(j)}+1/2]
\prod_{j=1}^{N-2} \prod_{\ell=j+1}^{N-1} [w_\ell+\tau_{\sigma(j)} z_{\sigma(j)}]
\prod_{j=1}^{N-1} \prod_{\ell=1}^{N-1} [w_\ell-\tau_{\sigma(j)} z_{\sigma(j)}]
\nonumber \\
=&[h][-2z_N]
\prod_{j=1}^{N-1}[z_N-z_j+1/2][z_N+z_j]
\prod_{j=1}^{N-1} [z_N-w_j+1/2][z_N+w_j] \nonumber \\
\times&L_{N}(z_1,\dots,z_{N-1}|w_1,\dots,w_{N-1}|h+1/2),
\end{align}
hence we have shown \eqref{specialcasetoshowtwo}.

The remaining thing to prove is Property 3,
which is obvious to see from the definition
of $L_N(z_1,\dots,z_N|w_1,\dots,w_N|h)$ \eqref{leftexpression}. \\

Next, we show that the functions $R_N(z_1,\dots,z_N|w_1,\dots,w_N|h)$
satisfy exactly the same properties in Proposition \ref{korepinlemma}.

\begin{proposition} \label{rightkorepinlemma}
The functions 
$R_N(z_1,\dots,z_N|w_1,\dots,w_N|h)$
satisfy, and are uniquely determined by, the following properties:
\begin{enumerate}
\item The functions
$R_N(z_1,\dots,z_N|w_1,\dots,w_N|h)$
are elliptic polynomials in $w_{N}$ of degree $2N$ with
the following quasi-periodicities: 
\begin{align}
&R_{N}(z_1,\dots,z_{N}|w_1,\dots,w_N+1|h)
=(-1)^{2N}
R_{N}(z_1,\dots,z_{N}|w_1,\dots,w_N|h)
, \label{rightqppartitionfunctionsone} \\
&R_{N}(z_1,\dots,z_N|w_1,\dots,w_N-i \log ({\bf q})/\pi|h)
 \nonumber\\
={}&(-{\bf q}^{-1})^{2N}
\exp (-2 \pi i
(2N w_N+h
)) R_{N}(z_1,\dots,z_{N}|w_1,\dots,w_N|h) \label{rightqppartitionfunctionstwo}.
\end{align}
\item The following relations hold:
\begin{align}
&\hspace{-36pt}R_{N}(z_1,\dots,z_{N}|w_1,\dots,w_N|h)
|_{w_N=-z_m}
=[h][-2z_m]
\prod_{ \substack{ j=1 \\ j \neq m} }^{N}[z_m-z_j+1/2][z_m+z_j]
\nonumber \\
\times&\prod_{j=1}^{N-1} [z_m+w_j+1/2][z_m-w_j]
R_{N-1}(z_1,\dots,\hat{z_m} ,\dots,z_{N}|w_1,\dots,w_{N-1}|h-1/2)
, \label{rightordinaryrecursionwavefunction} \\
&\hspace{-36pt}R_{N}(z_1,\dots,z_{N}|w_1,\dots,w_N|h)
|_{w_N=z_m}
=[h][-2z_m]
\prod_{ \substack{ j=1 \\ j \neq m} }^{N}[z_m-z_j+1/2][z_m+z_j]
\nonumber \\
\times&\prod_{j=1}^{N-1} [z_m-w_j+1/2][z_m+w_j]
R_{N-1}(z_1,\dots,\hat{z_m} ,\dots,z_{N}|w_1,\dots,w_{N-1}|h+1/2)
, \label{rightordinaryrecursionwavefunctiontwo}
\end{align}
for $m=1,\dots,N$,
and $\hat{z_m}$ in $R_{N-1}(z_1,\dots,\hat{z_m} ,\dots,z_{N}|w_1,\dots,w_{N-1}|h-1/2)$ and \\
$R_{N-1}(z_1,\dots,\hat{z_m} ,\dots,z_{N}|w_1,\dots,w_{N-1}|h+1/2)$
means that $z_m$ is removed.
\item  The following holds:
\begin{align}
&\hspace{-36pt} R_{1}(z_1|w_1|h)=
[h+z_1+w_1][w_1-z_1]-[h-z_1+w_1][w_1+z_1].
\label{rightordinaryinitialrecursion}
\end{align}
\end{enumerate}
\end{proposition}

Let us show Properties 1--3 of Proposition \ref{rightkorepinlemma}.
We first introduce the notation
\begin{align}
&c_N(z_1,\dots,z_N|w_1,\dots,w_N|h) \nonumber \\
=&\displaystyle \frac{[h]\prod_{j=1}^N [h-j/2+1]}
{[h-N/2 ]\prod_{j=1}^N [h+(N+1)/2-j]}
\prod_{1 \le j < k \le N} \frac{[z_j-z_k+1/2][w_k-w_j+1/2]}{[z_j-z_k][w_k-w_j]},
\end{align}
and write $R_N(z_1,\dots,z_N|w_1,\dots,w_N|h)$ as
\begin{align}
&R_N(z_1,\dots,z_N|w_1,\dots,w_N|h) \nonumber \\
=&c_N(z_1,\dots,z_N|w_1,\dots,w_N|h)
\mathrm{det}_N(Y_N(z_1,\dots,z_N|w_1,\dots,w_N|h)).
\end{align}
First, note that from the explicit form of the matrix elements of
$Y_N(z_1,\dots,z_N|w_1,\dots,w_N|h)$ \eqref{matrixelementstwo},
the dependence on $w_N$ in
$\mathrm{det}_N(Y_N(z_1,\dots,z_N|w_1,\dots,w_N|h))$
only comes from the $N$-th column, and the product of
factors which depend on $w_N$ in $c_N(z_1,\dots,z_N|w_1,\dots,w_N|h)$
is $\displaystyle \prod_{j=1}^{N-1} \frac{[w_N-w_j+1/2]}{[w_N-w_j]}$.
Then it is easy to check the quasi-periodicities
\eqref{rightqppartitionfunctionsone} and \eqref{rightqppartitionfunctionstwo}.
Note also that the factor $\displaystyle \prod_{j=1}^{N-1} \frac{[w_N-w_j+1/2]}{[w_N-w_j]}$ may lead to singularities at $w_N=w_j$, $j=1,\dots,N-1$,
but actually do not, since one can see from the matrix elements of
$Y_N(z_1,\dots,z_N|w_1,\dots,w_N|h)$ \eqref{matrixelementstwo}
that the determinant $\mathrm{det}_N(Y_N(z_1,\dots,z_N|w_1,\dots,w_N|h))$
vanishes when $w_j=w_k$ ($j \neq k$), and in particular
has zeroes at $w_N=w_j$, $j=1,\dots,N-1$.
$R_N(z_1,\dots,z_N|w_1,\dots,w_N|h)$ is holomorphic and
thus is an elliptic polynomial of $w_N$ of degree $2N$,
and Property 1 is proved.

Next we show Property 2. First, rewriting
$\mathrm{det}_N(Y_N(z_1,\dots,z_N|w_1,\dots,w_N|h))$ as
\begin{align}
&\mathrm{det}_N(Y_N(z_1,\dots,z_N|w_1,\dots,w_N|h))
=\prod_{k=1}^N \prod_{\ell=1}^{N} [z_\ell+w_k] [z_\ell-w_k] \nonumber \\
\times&\mathrm{det}_N ([h+(N-1)/2+w_k+z_j] [z_j+w_k]^{-1}
-[-h-(N-1)/2-w_k+z_j] [z_j-w_k]^{-1}),
\end{align}
one sees that $\mathrm{det}_N(Y_N(z_1,\dots,z_N|w_1,\dots,w_N|h))$
is antisymmetric with respect to
$z_j \longleftrightarrow z_k$ ($j \neq k$).
The factor 
$\displaystyle \prod_{1 \le j < k \le N} \frac{[z_j-z_k+1/2]}{[z_j-z_k]}$
in $c_N(z_1,\dots,z_N|w_1,\dots,w_N|h)$ is also
antisymmetric with respect to
$z_j \longleftrightarrow z_k$ ($j \neq k$), and we find that
$R_N(z_1,\dots,z_N|w_1,\dots,w_N|h)$ is symmetric with respect to
$z_j \longleftrightarrow z_k$ ($j \neq k$).
From this symmetry, it is enough to show
\eqref{rightordinaryrecursionwavefunction} and
\eqref{rightordinaryrecursionwavefunctiontwo} for the case $m=N$
\begin{align}
&\hspace{-36pt}R_{N}(z_1,\dots,z_{N}|w_1,\dots,w_N|h)
|_{w_N=-z_N}
=[h][-2z_N]
\prod_{j=1}^{N-1}[z_N-z_j+1/2][z_N+z_j]
\nonumber \\
\times&\prod_{j=1}^{N-1} [z_N+w_j+1/2][z_N-w_j]
R_{N-1}(z_1,\dots,z_{N-1}|w_1,\dots,w_{N-1}|h-1/2)
, \label{rightspecialcasetoshow} \\
&\hspace{-36pt}R_{N}(z_1,\dots,z_{N}|w_1,\dots,w_N|h)
|_{w_N=z_N}
=[h][-2z_N]
\prod_{j=1}^{N-1}[z_N-z_j+1/2][z_N+z_j]
\nonumber \\
\times&\prod_{j=1}^{N-1} [z_N-w_j+1/2][z_N+w_j]
R_{N-1}(z_1,\dots,z_{N-1}|w_1,\dots,w_{N-1}|h+1/2)
. \label{rightspecialcasetoshowtwo}
\end{align}
The other cases
\eqref{rightordinaryrecursionwavefunction} and
\eqref{rightordinaryrecursionwavefunctiontwo} for $m=1, \dots, N-1$
can be obtained from \eqref{rightspecialcasetoshow} and \eqref{rightspecialcasetoshowtwo}
by using the property that $R_{N}(z_1,\dots,z_{N}|w_1,\dots,w_N|h)$
is a symmetric function with symmetric variables $z_j$ $(j=1,\dots,N)$.

Let us show \eqref{rightspecialcasetoshow}.
First, we find the following relation for $c_N(z_1,\dots,z_N|w_1,\dots,w_N|h)$
\begin{align}
&c_N(z_1,\dots,z_N|w_1,\dots,w_N|h)|_{w_N=-z_N}
=\frac{[h][h+1/2]}{[h-1/2][h+N/2-1/2]} \nonumber \\
\times&\prod_{j=1}^{N-1} \frac{[z_N+w_j-1/2]}{[z_N+w_j]}
\prod_{j=1}^{N-1} \frac{[z_j-z_N+1/2]}{[z_j-z_N]}
c_{N-1}(z_1,\dots,z_{N-1}|w_1,\dots,w_{N-1}|h-1/2).
\label{factoruse}
\end{align}
Next, we analyze the determinant $\mathrm{det}_N(Y_N(z_1,\dots,z_N|w_1,\dots,w_N|h))$. It can be easily seen from the explicit form of the matrix elements of
$Y_N(z_1,\dots,z_N|w_1,\dots,w_N|h)$ \eqref{matrixelementstwo} that
among the matrix elements in
the $N$-th column of $Y_N(z_1,\dots,z_N|w_1,\dots,w_N|h)$,
only the matrix element in
$N$-th row is nonzero after the substitution $w_N=-z_N$.
Then one expands the determinant $\mathrm{det}_N(Y_N(z_1,\dots,z_N|w_1,\dots,w_N|h))|_{w_N=-z_N}$ by its $N$-th column to get
\begin{align}
&\mathrm{det}_N(Y_N(z_1,\dots,z_N|w_1,\dots,w_N|h))|_{w_N=-z_N} \nonumber \\
=&[h+(N-1)/2] \prod_{\ell=1}^{N-1} [z_\ell-z_N] \prod_{\ell=1}^N [z_\ell+z_N]
\mathrm{det}_{N-1}(\overline{Y}_N(z_1,\dots,z_{N}|w_1,\dots,w_{N}|h))
\label{righttochu},
\end{align}
where $\overline{Y}_N(z_1,\dots,z_{N}|w_1,\dots,w_{N}|h)$
is an $(N-1) \times (N-1)$ matrix which is obtained from
$Y_N(z_1,\dots,z_{N}|w_1,\dots,w_{N}|h)$ by removing the $N$-th row and
the $N$-th column. Since one can show
\begin{align}
&\overline{Y}_N(z_1,\dots,z_{N}|w_1,\dots,w_{N}|h)_{jk} \nonumber \\
=&Y_N(z_1,\dots,z_{N}|w_1,\dots,w_{N}|h)_{jk} \nonumber \\
=&-[z_N+w_k][z_N-w_k]
Y_{N-1}(z_1,\dots,z_{N-1}|w_1,\dots,w_{N-1}|h-1/2)_{jk},
\end{align}
for $j,k=1,\dots,N-1$, we can further rewrite \eqref{righttochu} as
\begin{align}
&\mathrm{det}_N(Y_N(z_1,\dots,z_N|w_1,\dots,w_N|h))|_{w_N=-z_N} \nonumber \\
=&[h+(N-1)/2] \prod_{\ell=1}^{N-1} [z_\ell-z_N] \prod_{\ell=1}^N [z_\ell+z_N]
\nonumber \\
\times&\prod_{k=1}^{N-1} (-[z_N+w_k][z_N-w_k])
\mathrm{det}_{N-1}(Y_{N-1}(z_1,\dots,z_{N-1}|w_1,\dots,w_{N-1}|h-1/2))
\nonumber \\
=&[2z_N][h+(N-1)/2] \prod_{j=1}^{N-1} [z_j-z_N][z_j+z_N]
\prod_{j=1}^{N-1} [z_N+w_j][w_j-z_N] \nonumber \\
\times&\mathrm{det}_{N-1}(Y_{N-1}(z_1,\dots,z_{N-1}|w_1,\dots,w_{N-1}|h-1/2))
\label{righttochutwo}.
\end{align}
Combining \eqref{factoruse} and \eqref{righttochutwo}, we get
\begin{align}
&R_N(z_1,\dots,z_N|w_1,\dots,w_N|h))|_{w_N=-z_N} \nonumber \\
=&c_N(z_1,\dots,z_N|w_1,\dots,w_N|h)
\mathrm{det}_N(Y_N(z_1,\dots,z_N|w_1,\dots,w_N|h))|_{w_N=-z_N} \nonumber \\
=&\frac{[h][h+1/2]}{[h-1/2][h+N/2-1/2]}
\prod_{j=1}^{N-1} \frac{[z_N+w_j-1/2]}{[z_N+w_j]}
\prod_{j=1}^{N-1} \frac{[z_j-z_N+1/2]}{[z_j-z_N]} \nonumber \\
\times&[2z_N][h+(N-1)/2] \prod_{j=1}^{N-1} [z_j-z_N][z_j+z_N]
\prod_{j=1}^{N-1} [z_N+w_j][w_j-z_N] \nonumber \\
\times&c_{N-1}(z_1,\dots,z_{N-1}|w_1,\dots,w_{N-1}|h-1/2)
\mathrm{det}_{N-1}(Y_{N-1}(z_1,\dots,z_{N-1}|w_1,\dots,w_{N-1}|h-1/2))
\nonumber \\
=&[h][-2z_N]
\prod_{j=1}^{N-1}[z_N-z_j+1/2][z_N+z_j]
\prod_{j=1}^{N-1} [z_N+w_j+1/2][z_N-w_j] \nonumber \\
\times&R_{N-1}(z_1,\dots,z_{N-1}|w_1,\dots,w_{N-1}|h-1/2),
\end{align}
which is exactly the relation \eqref{rightspecialcasetoshow}.

\eqref{rightspecialcasetoshowtwo} can be proved in a similar way.
First, we note the following relation holds
\begin{align}
&c_N(z_1,\dots,z_N|w_1,\dots,w_N|h)|_{w_N=z_N}
=-\frac{[h-N/2+1]}{[h+1/2]} \nonumber \\
\times&\prod_{j=1}^{N-1} \frac{[z_N-w_j+1/2]}{[z_N-w_j]}
\prod_{j=1}^{N-1} \frac{[z_j-z_N+1/2]}{[z_j-z_N]}
c_{N-1}(z_1,\dots,z_{N-1}|w_1,\dots,w_{N-1}|h+1/2).
\label{factorusetwo}
\end{align}
Next, one can see that only the matrix element in the $N$-th row
is nonzero among the matrix elements
in the $N$-th column of $Y_N(z_1,\dots,z_N|w_1,\dots,w_N|h)$,
and we find that the expansion of the determinant
$\mathrm{det}_N(Y_N(z_1,\dots,z_N|w_1,\dots,w_N|h))|_{w_N=z_N}$
by its $N$-th column gives the following relation
\begin{align}
&\mathrm{det}_N(Y_N(z_1,\dots,z_N|w_1,\dots,w_N|h))|_{w_N=z_N} \nonumber \\
=&[h+(N-1)/2] \prod_{\ell=1}^{N-1} [z_\ell-z_N] \prod_{\ell=1}^N [z_\ell+z_N]
\mathrm{det}_{N-1}(\overline{Y}_N(z_1,\dots,z_{N}|w_1,\dots,w_{N}|h))
\label{righttochuthree}.
\end{align}
Using the relation
\begin{align}
&\overline{Y}_N(z_1,\dots,z_{N}|w_1,\dots,w_{N}|h)_{jk} \nonumber \\
=&Y_N(z_1,\dots,z_{N}|w_1,\dots,w_{N}|h)_{jk} \nonumber \\
=&[z_N+w_k][z_N-w_k]
Y_{N-1}(z_1,\dots,z_{N-1}|w_1,\dots,w_{N-1}|h+1/2)_{jk},
\end{align}
for $j,k=1,\dots,N-1$, we can further rewrite \eqref{righttochuthree} as
\begin{align}
&\mathrm{det}_N(Y_N(z_1,\dots,z_N|w_1,\dots,w_N|h))|_{w_N=z_N} \nonumber \\
=&[h+(N-1)/2] \prod_{\ell=1}^{N-1} [z_\ell-z_N] \prod_{\ell=1}^N [z_\ell+z_N]
\nonumber \\
\times&\prod_{k=1}^{N-1} [z_N+w_k][z_N-w_k]
\mathrm{det}_{N-1}(Y_{N-1}(z_1,\dots,z_{N-1}|w_1,\dots,w_{N-1}|h+1/2))
\nonumber \\
=&[2z_N][h+(N-1)/2] \prod_{j=1}^{N-1} [z_j-z_N][z_j+z_N]
\prod_{j=1}^{N-1} [z_N+w_j][z_N-w_j] \nonumber \\
\times&\mathrm{det}_{N-1}(Y_{N-1}(z_1,\dots,z_{N-1}|w_1,\dots,w_{N-1}|h+1/2))
\label{righttochufour}.
\end{align}
Combining \eqref{factorusetwo} and \eqref{righttochufour}, we get
\begin{align}
&R_N(z_1,\dots,z_N|w_1,\dots,w_N|h))|_{w_N=z_N} \nonumber \\
=&c_N(z_1,\dots,z_N|w_1,\dots,w_N|h)
\mathrm{det}_N(Y_N(z_1,\dots,z_N|w_1,\dots,w_N|h))|_{w_N=z_N} \nonumber \\
=&
-\frac{[h-N/2+1]}{[h+1/2]} \prod_{j=1}^{N-1} \frac{[z_N-w_j+1/2]}{[z_N-w_j]}
\prod_{j=1}^{N-1} \frac{[z_j-z_N+1/2]}{[z_j-z_N]}
\nonumber \\
\times&[2z_N][h+(N-1)/2] \prod_{j=1}^{N-1} [z_j-z_N][z_j+z_N]
\prod_{j=1}^{N-1} [z_N+w_j][z_N-w_j]
\nonumber \\
\times&c_{N-1}(z_1,\dots,z_{N-1}|w_1,\dots,w_{N-1}|h+1/2)
\mathrm{det}_{N-1}(Y_{N-1}(z_1,\dots,z_{N-1}|w_1,\dots,w_{N-1}|h+1/2))
\nonumber \\
=&
[h][-2z_N]
\prod_{j=1}^{N-1}[z_N-z_j+1/2][z_N+z_j]
\prod_{j=1}^{N-1} [z_N-w_j+1/2][z_N+w_j]
\nonumber \\
\times&R_{N-1}(z_1,\dots,z_{N-1}|w_1,\dots,w_{N-1}|h+1/2),
\end{align}
hence the relation \eqref{rightspecialcasetoshowtwo} is proved.
Note that in the last equality, we used
the identity $\displaystyle \frac{[h+(N-1)/2][h-N/2+1]}{[h+1/2]}=[h]$
which holds for any integer $N$.

What remains is to show Property 3,
which can be easily seen from the definition of
$R_N(z_1,\dots,z_N|w_1,\dots,w_N|h))$ \eqref{rightexpression}. \\

Finally, the two propositions we proved
(Propositions \ref{korepinlemma} and \ref{rightkorepinlemma})
mean that the sequence of functions
$\{ L_N(z_1,\dots,z_N|w_1,\dots,w_N|h) | N \in \mathbb{N} \}$
and
$\{ R_N(z_1,\dots,z_N|w_1,\dots,w_N|h) | N \in \mathbb{N} \}$
are exactly the same, and hence
\begin{align}
L_N(z_1,\dots,z_N|w_1,\dots,w_N|h)=R_N(z_1,\dots,z_N|w_1,\dots,w_N|h),
\end{align}
for $N \in \mathbb{N}$.
This concludes the proof of Theorem \ref{identitytheoremprove}.

\end{proof}

\section{Conclusion}
In this paper, we proved a duality between two elliptic determinants.
The proof presented in this paper is inspired by and
can be regarded as a variant of the Izergin-Korepin method.
It is originally a method initiated by Korepin and Izergin \cite{Ko,Iz}
to study and find explicit forms
of partition functions of quantum integrable models.
The key of the Izergin-Korepin method is to list the
properties for a sequence of functions which uniquely define them,
and one can use this idea to prove identities between
determinants which do not seem to have factorized expressions
and which look different at first sight.

It seems that there are many other transformation formulas
between (elliptic) determinants or Pfaffians
which do not seem to have factorized expressions,
and it is interesting to discover and prove them. The previous studies on
factorized formulas for (elliptic) determinants and Pfaffians
\cite{Cauchy,Frob,Kr,Has,TV,War,RosSch,Schpath,BK,Rains,Okadapfaffian,Roselldet,Rosellpfaffian} may give hints to find them.
Another interesting resource for the discovery is
partition functions of (elliptic) integrable models.
We found a duality between two elliptic Pfaffians as
a consequence of analyzing a variant of the domain wall
boundary partition functions (OS boundary) \cite{MotegiPfaffian}.
The partition functions of elliptic integrable models
may also give clues to find them.
As for the trigonometric $U_q(\widehat{sl_2})$ six-vertex model,
Kuperberg \cite{Ku2} uses various variations
of the domain wall boundary partition functions
to compute various generating functions of
the enumeration of alternating sign matrices.
We lifted one of his variations from the trigonometric model
to the elliptic model in \cite{MotegiPfaffian} and found
a duality between two elliptic Pfaffians.
It may also be interesting to lift other variations
of the domain wall boundary partition functions
to the elliptic model and find transformation formulas.

\section*{Acknowledgments}
This work was partially supported by Grant-in-Aid
for Scientific Research (C)
No. 18K03205 and No. 16K05468.

\end{document}